\theoremstyle{plain}
\newtheorem{thm}{Theorem}
\newtheorem{lem}{Lemma}
\theoremstyle{definition}
\newtheorem{defn}{Definition}
\providecommand{\N}[0]{\mathbb{N}}
\providecommand{\nc}[1]{\operatorname{Nc}(#1)}
\begin{document}
\title{Nonnormal quotients}
\author{Charlotte Aten}
\date{2016}
\thanks{email: \href{mailto:caten2@u.rochester.edu}{caten2@u.rochester.edu}\\I would like to thank C. Douglas Haessig for providing some helpful advice on this project.}
\begin{abstract}
We present a natural extension of the process of taking a group quotient to arbitrary subgroups. We first review basic concepts from group theory. This will allow us to see the relationship between our new, more general quotient operation and the standard group quotient. In particular, we will find that a naive attempt to perform the quotient process with a nonnormal subgroup actually leads to a well-defined operation which can be easily characterized with existing terminology.
\end{abstract}
\maketitle

\tableofcontents

\section{Groups}
We recall the relevant terminology from group theory\cite{ash,gallian}. A \emph{group} consists of an underlying set $G$ and an operation $f:G^2\to G$ such that the following conditions hold:
\begin{enumerate}
\item[Associativity] for every $x,y,z\in G$ we have that $f(f(x,y),z)=f(x,f(y,z))$,
\item[Identity] there exists $e\in G$ such that $f(x,e)=f(e,x)=x$ for every $x\in G$, and
\item[Inverses] for every $x\in G$ there exists some $x^{-1}\in G$ such that $$f(x,x^{-1})=f(x^{-1},x)=e.$$
\end{enumerate}
We often write $xy$ to indicate $f(x,y)$ when the group in question is understood. Infix notation, such as $x*y$, is also sometimes used. We will refer to both the set $G$ and the group consisting of $G$ under $f$ as simply $G$ when the context is clear.

Given a group $G$ we may have that some $H\subset G$ also forms a group under the same operation as $G$. In this case we say that $H$ is a \emph{subgroup} of $G$ and write $H\le G$. If $H\le G$ and $a\in G$ we refer to $aH\coloneqq\{ah|h\in H\}$ as the \emph{left coset} of $H$ containing $a$. This terminology is justified, for the left cosets of $H$ in $G$ partition $G$. We can similarly define the \emph{right coset} of $H$ containing $a$ as $Ha\coloneqq\{ha|h\in H\}$. The right cosets of $H$ in $G$ also partition $G$.

If a left coset $H'$ of $H\le G$ can be written as $H'=aH$ for $a\in G$, we say that $a$ is a \emph{representative} of $H'$. Any element of $H'$ can serve as a representative, and we have that $a,b\in G$ are both representatives of $H'$ if and only if $b^{-1}a\in H$. (To see this, set $aH=bH$ and note that $b^{-1}ae=b^{-1}a$ must belong to $b^{-1}aH=H$.)

In general it is not the case that $aH=Ha$ for a given $H\le G$ and $a\in G$. Moreover, the collection of left cosets of $H$ in $G$ and the collection of all right cosets of $H$ in $G$ are distinct from each other in general. If we do have that $aH=Ha$ for all $a\in G$, we say that the subgroup $H$ is \emph{normal} in $G$ and write $H\trianglelefteq G$.

If $H\trianglelefteq G$ then the (left) cosets of $H$ from a group under elementwise multiplication. The identity of this group is the coset $eH=H$. We refer to such a group as the \emph{quotient group} of $G$ by $H$, which we indicate by $G/H$.

If $H\not\trianglelefteq G$ (that is, $H$ is not normal in $G$ or $H$ is \emph{nonnormal} in $G$) then the left cosets of $H$ do not form a group. The set of such cosets is not even closed under multiplication. We now generalize the construction of a quotient group to taking a quotient by an arbitrary subgroup.

\section{Definitions}
In the following definitions let $G$ be a group and let $H$ be any subgroup of $G$. In particular, $H$ need not be normal in $G$. We would like the product of two left cosets of $H$ in $G$ to again be a left coset. Unfortunately this is not always the case, so we provide terminology for those subsets of $G$ which are products of cosets of $H$.

\begin{defn}[Block induced by $H$]
Let $C_1=aH$ and $C_2=bH$ be left cosets of $H\le G$. A \emph{left block} $B$ induced by $H$ in $G$ is a set of the form $$B\coloneqq C_1C_2=\{ah_1bh_2|h_1,h_2\in H\}$$ where $a$ and $b$ are representatives of $C_1$ and $C_2$, respectively. Right blocks are defined analogously.
\end{defn}

From now on we will only make use of left blocks and the related left-handed objects. We define representatives for blocks in analogy with those for cosets. That is, we say that $a,b$ is a \emph{representative pair} for the block $B$ if $B=aHbH$. The left blocks of $H$ in $G$ naturally induce a well-defined relation on the left cosets of $H$ in $G$ as well as a relation on the elements of $G$ itself.

\begin{defn}[Relation $\theta$]
Define a relation $\theta$ on the left cosets of $H$ in $G$ by $aH\theta bH$ for $a,b\in G$ if there exist $m_1,n_1,m_2,n_2\in G$ such that $m_1n_1=a$, $m_2n_2=b$, $m_1H=m_2H$, and $n_1H=n_2H$. That is, we say $aH\theta bH$ when there exists a block $B$ such that $a,b\in B$.
\end{defn}

\begin{lem}
The relation $\theta$ is well-defined.
\end{lem}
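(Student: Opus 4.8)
The plan is to interpret ``well-defined'' in the only sense that makes the statement nontrivial: the symbol $aH\,\theta\,bH$ must depend only on the cosets $aH$ and $bH$, not on the chosen representatives $a$ and $b$. Concretely, I would show that whenever $a'H=aH$ and $b'H=bH$, we have $aH\,\theta\,bH$ if and only if $a'H\,\theta\,b'H$. Since the hypotheses are symmetric in the primed and unprimed letters, it suffices to prove a single implication, say $aH\,\theta\,bH\Rightarrow a'H\,\theta\,b'H$, and then invoke symmetry.

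The central observation I would isolate first is that every left block is invariant under right multiplication by $H$. Indeed, if $B=C_1C_2$ with $C_1,C_2$ left cosets, then writing $C_2=cH$ gives $C_2H=cHH=cH=C_2$, and hence $BH=C_1C_2H=C_1C_2=B$. Alongside this I would use the reformulation recorded at the end of the definition of $\theta$, namely that $aH\,\theta\,bH$ holds exactly when there is a single block $B$ with $a,b\in B$.

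Now suppose $aH\,\theta\,bH$, witnessed by a block $B$ containing both $a$ and $b$. Because $a'\in a'H=aH$ and $b'\in b'H=bH$, there are $h_a,h_b\in H$ with $a'=ah_a$ and $b'=bh_b$. The right-invariance $BH=B$ then yields $a'=ah_a\in B$ and $b'=bh_b\in B$, so the \emph{same} block $B$ witnesses $a'H\,\theta\,b'H$. Combined with the symmetric implication, this shows that $\theta$ is well-defined. If one prefers to argue directly from the existential form of the definition, the same idea produces explicit witnesses: from $m_1n_1=a$, $m_2n_2=b$, $m_1H=m_2H$, and $n_1H=n_2H$ one takes the pairs $m_1,\,n_1h_a$ and $m_2,\,n_2h_b$, which satisfy all four required conditions since $n_1h_aH=n_1H$ and $n_2h_bH=n_2H$.

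I do not anticipate a serious obstacle; the only thing to get right is the bookkeeping of where the representative ambiguity lives. The key conceptual point, and the step I would flag as carrying all the content, is that changing a coset representative amounts to right multiplication by an element of $H$, and that a block absorbs exactly such right multiplications because its rightmost factor is a left coset. Once the right-invariance of blocks is in hand, the remainder is immediate.
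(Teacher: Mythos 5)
Your proposal is correct and is essentially the paper's own argument: the paper keeps the first factors $m_1,m_2$ and replaces the second factors by $m_1^{-1}c$ and $m_2^{-1}d$, which are exactly your witnesses $n_1h_a$ and $n_2h_b$, and then verifies $m_1^{-1}cH=n_1H=n_2H=m_2^{-1}dH$ just as you do. Your opening reformulation via the right-invariance $BH=B$ of blocks is a slightly cleaner conceptual packaging of the same computation, but it is not a genuinely different route.
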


\begin{proof}
Suppose $aH=cH$ and $bH=dH$. We show that $aH\theta bH$ if and only if $cH\theta dH$.

Assume $aH\theta bH$. Then there exist $m_1,n_1,m_2,n_2\in G$ such that $m_1n_1=a$, $m_2n_2=b$, $m_1H=m_2H$, and $n_1H=n_2H$. Note that $c=m_1(m_1^{-1}c)$ and $d=m_2(m_2^{-1}d)$. We already have that $m_1H=m_2H$, so if we can show that $m_1^{-1}cH=m_2^{-1}dH$ we will have shown that $cH\theta dH$, as desired. This is indeed the case, as $$m_1^{-1}cH=m_1^{-1}aH=m_1^{-1}m_1n_1H=n_1H=n_2H=m_2^{-1}m_2n_2H=m_2^{-1}bH=m_2^{-1}dH.$$

By symmetry the converse holds, as well.
\end{proof}

\begin{defn}[Relation $\psi$]
Define a relation $\psi$ on the elements of $G$ by $a\psi b$ for $a,b\in G$ if there exist $x_1,y_1,x_2,y_2\in G$ such that $x_1y_1=a$, $x_2y_2=b$, $x_1H=x_2H$, and $y_1H=y_2H$. That is, we say $a\psi b$ when there exists a block $B$ such that $a,b\in B$.
\end{defn}

\begin{lem}
For any $a,b\in G$ we have that the following are equivalent:
\begin{enumerate}
\item[1.] there exists a block $B$ such that $aH,bH\subset B$
\item[2.] $aH\theta bH$
\item[3.] $a\psi b$
\end{enumerate}
\end{lem}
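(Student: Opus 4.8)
The plan is to establish the biconditional $(2)\Leftrightarrow(3)$ essentially by inspection and then to prove $(1)\Leftrightarrow(3)$; together these yield the three-way equivalence. In fact almost all the genuine content lies in a single structural observation about blocks, with the rest being bookkeeping.

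First I would observe that the defining conditions for $aH\,\theta\,bH$ (reading the definition of $\theta$ with $a,b$ themselves as the chosen representatives) and for $a\,\psi\,b$ are word-for-word identical: both assert the existence of $m_1,n_1,m_2,n_2\in G$ with $m_1n_1=a$, $m_2n_2=b$, $m_1H=m_2H$, and $n_1H=n_2H$. Hence $(2)\Leftrightarrow(3)$ needs nothing beyond this comparison. The only subtlety is that $\theta$ is a relation on cosets rather than on elements, so one must know that its truth value does not depend on the representatives chosen—but this is exactly what the preceding well-definedness lemma licenses.

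The key step, and the one piece of actual work, is to prove that every block $B=cHdH$ is a union of left cosets of $H$; equivalently, that for any $g\in G$ one has $g\in B$ if and only if $gH\subseteq B$. This follows because $B$ is closed under right multiplication by $H$: if $g=ch_1dh_2\in B$ with $h_1,h_2\in H$, then for any $h\in H$ we get $gh=ch_1d(h_2h)\in B$ since $h_2h\in H$. Thus membership of a single element of a coset already forces the entire coset into $B$. I expect this closure property to be the main obstacle only in the sense that it is the conceptual crux—the verification itself is a one-line calculation, but it is what allows element-level membership to be upgraded to coset-level containment.

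With this in hand the remaining implications are routine. For $(1)\Rightarrow(3)$, if $aH,bH\subseteq B$ for a block $B=cHdH$, then in particular $a,b\in B$, so $a=ch_1dh_2$ and $b=ch_3dh_4$ for suitable $h_i\in H$; taking $x_1=ch_1$, $y_1=dh_2$, $x_2=ch_3$, $y_2=dh_4$ witnesses $a\,\psi\,b$. For $(3)\Rightarrow(1)$, given witnesses $x_1,y_1,x_2,y_2$ for $a\,\psi\,b$, I would form the block $B=x_1Hy_1H$. Since $x_1H=x_2H$ and $y_1H=y_2H$, this same set equals $x_2Hy_2H$, so both $a=x_1y_1$ and $b=x_2y_2$ lie in $B$. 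Invoking the closure property above upgrades $a,b\in B$ to $aH,bH\subseteq B$, which is precisely $(1)$.
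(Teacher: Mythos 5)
Your proposal is correct and follows essentially the same route as the paper: the crux in both is that a block $B=cHdH$ satisfies $BH=B$, so membership $a\in B$ upgrades to $aH\subseteq B$, and the equivalence of (2) and (3) is read off directly from the definitions. Your write-up is slightly more explicit about the representative-independence of $\theta$ and about isolating the closure property as a standalone observation, but the substance is identical.
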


\begin{proof}
It follows immediately from the definitions of $\theta$ and $\psi$ that we have $a\psi b$ if and only if $aH\theta bH$. It is also immediate that if there exists a block $B$ such that $aH,bH\subset B$ then $a,b\in B$ and hence $a\psi b$ and $aH\theta bH$. Now suppose instead that there exists a block $C$ such that $a,b\in C$. We show this implies that $aH,bH\subset B$ for some block $B$.

Let $C=c_1Hc_2H$. Then $$aH\subset c_1Hc_2HH=c_1Hc_2H=C$$ and $$bH\subset c_1Hc_2HH=c_1Hc_2H=C,$$ so we can just take $B=C$.
\end{proof}

The relation $\psi$ (and hence the relation $\theta$) is symmetric and reflexive but not transitive. For example, if we take $G$ to be the symmetric group on $\{0,1,2,3\}$ and take $H$ to be the subgroup generated by the transposition $(3,4)$ then one can verify that $()\psi(1,2)$ and $(1,2)\psi(1,2,3,4)$, but it is not the case that $()\psi(1,2,3,4)$ as there is no block which contains both $()$ and $(1,2,3,4)$.

\section{Main Result}
We again let $G$ be a group with a subgroup $H$. We denote the normal closure of $H$ by $\nc{H}$ and denote the identity in $G$ by $e$. We now take the transitive closure of the relation $\psi$ on the elements of $G$. Let $S_0=H$ and for any $n\in\N$ let $$S_n=\{g\in G|g\psi s\text{ for some }s\in S_{n-1}\}.$$ Also let $S=\bigcup_{n\in\N}S_n$.

\begin{defn}
Define a relation $\sim$ on $G$ as follows. Given $g_1,g_2\in G$ we say that $g_1\sim g_2$ when there exists some $a\in G$ such that $g_1,g_2\in aS$.
\end{defn}

\begin{thm}
The relation $\sim$ is precisely the relation induced by the left cosets of $\nc{H}$ in $G$. Moreover, $S=\nc{H}$.
\end{thm}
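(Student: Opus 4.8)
The plan is to establish the ``Moreover'' claim $S=\nc{H}$ first, since the statement about $\sim$ then follows almost immediately. Indeed, once we know $S=\nc{H}$, each set $aS=a\nc{H}$ is simply a left coset of $\nc{H}$, and because these cosets partition $G$, two elements lie in a common coset $aS$ precisely when they lie in the same coset of $\nc{H}$. Hence $\sim$ coincides with the relation induced by the left cosets of $\nc{H}$, and this last reduction is purely formal.

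To prove $S\subseteq\nc{H}$ I would first check that a single $\psi$-step stays within one left coset of $\nc{H}$. Writing $a=x_1y_1$ and $b=x_2y_2$ with $x_2=x_1u$ and $y_2=y_1v$ for some $u,v\in H$, a direct computation gives $b^{-1}a=v^{-1}(y_1^{-1}u^{-1}y_1)$, which lies in $\nc{H}$ since $v^{-1}\in H$ and $y_1^{-1}u^{-1}y_1$ is a conjugate of an element of $H$. Thus $a\psi b$ implies $a\nc{H}=b\nc{H}$. Because $\nc{H}$ is normal, ``lying in the same left coset of $\nc{H}$'' is an equivalence relation containing $\psi$, so it also contains the transitive closure of $\psi$, which I will denote $\approx$. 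Since $S$ is generated from $H\subseteq\nc{H}$ by iterating $\psi$, every element of $S$ remains in the coset $\nc{H}$, giving $S\subseteq\nc{H}$.

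The reverse inclusion is the main obstacle. I would obtain it by showing that $S$ is a subgroup of $G$ containing every conjugate $ghg^{-1}$ with $h\in H$; since $\nc{H}$ is by definition generated by such conjugates, this forces $\nc{H}\subseteq S$. The key observation is that $\psi$ is invariant under left translation: if $a,b$ lie in a block $xHyH$, then $ca,cb$ lie in the block $(cx)HyH$, so $a\psi b$ implies $ca\psi cb$, and the same holds for $\approx$. Noting that $S$ is exactly the $\approx$-class of $e$ (all of $H$ lies in the single block $H=eHeH$, hence in one class, and $g\in S$ iff $g$ is $\psi$-linked to $H$), left invariance yields closure under products and inverses: if $a,b\in S$ then $ab\approx a\approx e$ and $a^{-1}\approx e$. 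Finally, for a conjugate $ghg^{-1}$ the block $gHg^{-1}H$ contains both $e=g\cdot g^{-1}$ (taking the $H$-factors trivial) and $ghg^{-1}$, so $ghg^{-1}\psi e$ and therefore $ghg^{-1}\in S$.

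Combining the two inclusions gives $S=\nc{H}$, and the identification of $\sim$ with the coset relation follows as indicated in the first paragraph. I expect the bookkeeping in the left-translation step and the conjugate-block trick to be the only delicate points; everything surrounding the passage to $\sim$ is routine once $S=\nc{H}$ is in hand.
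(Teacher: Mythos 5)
Your proposal is correct, and it reorganizes the argument in a way that differs meaningfully from the paper's. For $S\subseteq\nc{H}$ you prove the general single-step fact that $a\psi b$ implies $b^{-1}a=v^{-1}(y_1^{-1}u^{-1}y_1)\in\nc{H}$, and then simply observe that ``lying in a common left coset of $\nc{H}$'' is an equivalence relation containing $\psi$ and hence containing its transitive closure; the paper instead verifies only the special case $S_1\subseteq\nc{H}$ by an explicit factorization $g=(ah_6a^{-1})(eh_2e^{-1})$ and handles the general case by a contradiction argument resting on the fact that cosets of $\nc{H}$ partition the blocks of $H$. Your route is cleaner here: the paper's contradiction step has to produce a ``first exit'' pair $a\psi b$ with $a\in\nc{H}$, $b\notin\nc{H}$ and then reason informally about blocks, whereas your monotonicity argument avoids that entirely. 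For $\nc{H}\subseteq S$ the two proofs share the same germ --- the block $gHg^{-1}H$ contains both $e$ and $ghg^{-1}$, so every canonical generator is $\psi$-related to $e$ --- but you package the rest as ``$S$ is the $\approx$-class of $e$, is closed under products and inverses by left-translation invariance of $\psi$, and contains the generators, hence contains $\nc{H}$,'' while the paper peels off generators one at a time via $g_2\psi g_1g_2$ and inducts on the length of a product of conjugates. These are essentially equivalent in content (the paper's $g_2\psi g_1g_2$ is an instance of your left-invariance), but your subgroup formulation makes the closure properties explicit and spares you the induction. Both arguments are complete; yours buys some conceptual economy, while the paper's stays closer to the concrete generator-by-generator manipulations it set up earlier.
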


\begin{proof}
We will show that $S=\nc{H}$, so that our definition of $\sim$ is well-defined. Recall that $\nc{H}=\langle ghg^{-1}|g\in G,h\in H\rangle$. We begin by showing that $$S_1\coloneqq\{g\in G|g\psi h\text{ for some }h\in H\}\subset\langle ghg^{-1}|g\in G,h\in H\rangle=\nc{H}.$$

Suppose that $g\psi h$. Then $g,h\in aHbH$ for some $a,b\in G$, so $g=ah_1bh_2$ and $h=ah_3bh_4$ for some $h_1,h_2,h_3,h_4\in H$. It follows that $$b=h_3^{-1}a^{-1}hh_4^{-1}=h_3^{-1}a^{-1}h_5$$ for some $h_5\in H$. Then $$g=ah_1(h_3^{-1}a^{-1}h_5)h_2=a(h_1h_3^{-1})a^{-1}eh_2e^{-1}=(ah_6a^{-1})(eh_2e^{-1})$$ where $h_6\in H$. Since $a,e\in G$, we have written $g$ as a product of two of the generators of $\nc{H}$. Thus, $$S_1=\{g\in G|g\psi h\text{ for some }h\in H\}\subset\langle ghg^{-1}|g\in G,h\in H\rangle=\nc{H}.$$

Now instead suppose that $g_1$ belongs to the set of canonical generators for $\nc{H}$ where $g_1=g_3h_1g_3^{-1}$ for $g_3\in G$ and $h_1\in H$. Suppose also that $g_2\in G$. Note that $$g_2=g_3eg_3^{-1}g_2e,$$ so $g_2\in g_3Hg_3^{-1}g_2H$. Note also that $$g_1g_2=g_3h_1g_3^{-1}g_2e,$$ so $g_1g_2\in g_3Hg_3^{-1}g_2H,$ as well. It follows that $g_2\psi g_1g_2$.

Consider the element $g\in\nc{H}$ where $g=\prod_{i=1}^{k}g_i$ with each $g_i$ of the form $\gamma_ih_i\gamma^{-1}_i$ for some $\gamma_i\in G$ and some $h_i\in H$. Since $g_k\in \gamma_kH\gamma^{-1}_kH$ and $e=\gamma_ke\gamma^{-1}_ke\in \gamma_iH\gamma^{-1}_kH$, we have that $g_k\psi e$ and thus $g_k\in S_1$.

By our previous reasoning it follows that $g_k\psi g_{k-1}g_k$, so $g_{k-1}g_k\in S_2$. By induction we see that $g=\prod_{i=1}^{k}g_i\in S_k\subset S$. Since for any $g\in\nc{H}$ we have that $g\in S$, it follows that $\nc{H}\subset S$.

We now show that $S\subset\nc{H}$. Note that the left cosets of $\nc{H}$ partition the left cosets of $H$ as well as the elements of $G$. It follows that the blocks induced by $\nc{H}$ partition the blocks induced by $H$. We now assume towards a contradiction that $S\not\subset\nc{H}$.

Since we assume $S\supsetneq\nc{H}$ there exist some $a,b\in S$ such that $a\psi b$ with $a\in\nc{H}$ and $b\notin\nc{H}$. By definition of $\sim$ there must then exist blocks $B_1,B_2$ induced by $H$ such that $B_1\psi B_2$ where $a\in B_1$ and $b\in B_2$. This implies that there exists some $c\in G$ such that $c\in B_1\cap B_2$.

Let $B_2=\alpha H\beta H$. Then there exists some $h\in H$ such that $c\in\alpha h\beta H$ with $\alpha h\beta H\not\subset\nc{H}$. Since the cosets of $\nc{H}$ partition $G$ and $\alpha h\beta H\not\subset\nc{H}$, this is a contradiction, as the element $c$ belongs to both $\nc{H}$ in addition to another, distinct coset of $\nc{H}$. It must then be that $S\subset\nc{H}$.

As we already had containment in the other direction, this establishes that $S=\nc{H}$ and that the relation $\sim$ is well-defined.

Since we now know that $S=\nc{H}$, we have that $g_1\sim g_2$ for $g_1,g_2\in G$ if and only if there exists some $a\in G$ such that $g_1,g_2\in a\nc{H}$. This is the definition of the relation induced on the elements of $G$ by the left cosets of $\nc{H}$.
\end{proof}

The generalized quotient of $G$ by $H$ may then be defined as the group of equivalence classes of elements of $G$ under the relation $\sim$ induced by $H$ as above. We have already seen that our this operation is natural, for we began examining blocks induced by $H\le G$ and made the innocent identification of two blocks with a nonempty intersection. The described completion of this relation is precisely that induced by (left) cosets of $\nc{H}$ on the elements of $G$. It is now apparent that the appropriate extension of quotients of a group $G$ to nonnormal subgroups $H$ is merely taking the quotient $G/\nc{H}$. Of course, this agrees with the usual quotient construction when $H\trianglelefteq G$.

\section{The Block Relation $\rho$}
Again take $G$ to be a group and take $H$ to be a subgroup of $G$, in particular a nonnormal subgroup. One might wonder what happens if, upon discovering that the product of two cosets of $H$ was not always a coset, we make the following definition.

\begin{defn}(Relation $\rho$)
Let $B_1$ and $B_2$ be left blocks induced by $H\le G$. We say that $B_1\rho B_2$ if $B_1\cap B_2\neq\varnothing$.
\end{defn}

Essentially we would like to ``glue together'' blocks induced by $H$ until we have a collection of disjoint subsets of $G$ which form a group under elementwise multiplication. This was, in fact, the original motivation for the present paper, but the previous argument in terms of the relation $\theta$ on individual group elements turned out to be easier to produce. Based on that argument we now know that such a ``gluing'' process must ultimately yield an equivalence relation on blocks induced by $H$ which corresponds to the relation induced by $\nc{H}$ on the elements of $H$.

As we saw with the relations $\theta$ and $\psi$, it is trivial that $\rho$ is reflexive and symmetric, but it is not in general transitive\footnote{Thanks to Michael Kinyon for pointing out that we already had examples where transitivity fails.}. For example, if we take $G$ to be the symmetric group on $\{0,1,2\}$ and take $H$ to be the subgroup generated by the transposition $(2,3)$ then one can verify that $HH\rho(1,2)H(1,2)H$ and $(1,2)H(1,2)H\rho H(1,2)H$, but it is not the case that $HH\rho H(1,2)H$ as the cosets of $H$ in $G$ are disjoint. If $\rho$ is transitive then we have the following result.

\begin{lem}
Suppose $\rho$ is transitive. Then the elements of the blocks $B$ such that $B\rho H$ are precisely the elements of $\nc{H}$.
\end{lem}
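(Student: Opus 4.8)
The plan is to prove the set equality $\bigcup\{B\mid B\rho H\}=\nc{H}$ by establishing the two inclusions separately, and to isolate exactly where the transitivity hypothesis is needed. The one structural fact I would record at the outset is that every block of $H$ is contained in a single left coset of $\nc{H}$: since $\nc{H}\trianglelefteq G$, for any $a,b\in G$ we have $aHbH\subseteq a\nc{H}b\nc{H}=ab\nc{H}$. In particular the block $H=HH$ lies inside the coset $\nc{H}$ itself.

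This fact immediately yields the inclusion $\subseteq$, and notably it does not require transitivity. If $B\rho H$, then by definition $B\cap H\neq\varnothing$; a common element lies both in the coset of $\nc{H}$ carrying $B$ and in $\nc{H}$, and since the cosets of $\nc{H}$ are disjoint these cosets must coincide. Hence $B\subseteq\nc{H}$, and taking the union over all such $B$ gives $\bigcup\{B\mid B\rho H\}\subseteq\nc{H}$.

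For the reverse inclusion I would invoke the Main Result, which gives $S=\nc{H}$, where $S$ is the transitive closure of $\psi$ built from $H$. Thus any $g\in\nc{H}$ lies in some $S_n$. If $n=0$ then $g\in H=HH$ and the block $H$ satisfies $H\rho H$, so this case is handled directly. For $n\geq1$, unwinding the definition of the $S_n$ produces a chain $g=u_0\,\psi\,u_1\,\psi\cdots\psi\,u_n$ with $u_n\in H$. By the definition of $\psi$, each adjacent relation $u_{i-1}\,\psi\,u_i$ is witnessed by a block $D_i$ containing both $u_{i-1}$ and $u_i$. Consecutive blocks $D_i$ and $D_{i+1}$ then share the element $u_i$, so $D_i\rho D_{i+1}$, while $D_n$ contains $u_n\in H$ and therefore $D_n\rho H$. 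Assembling these gives a $\rho$-chain $D_1\rho D_2\rho\cdots\rho D_n\rho H$ with $g=u_0\in D_1$.

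The crux—and the only place the hypothesis is used—is the final step: I would apply the transitivity of $\rho$ to collapse this chain to the single relation $D_1\rho H$. Since $g\in D_1$, this exhibits $g$ as an element of a block $\rho$-related to $H$, giving $\nc{H}\subseteq\bigcup\{B\mid B\rho H\}$ and completing the proof. The main obstacle is precisely this collapse: without transitivity a chain of pairwise-intersecting blocks need not terminate in a block meeting $H$, exactly as the symmetric-group example preceding the statement illustrates, so the argument genuinely depends on the standing assumption that $\rho$ is transitive.
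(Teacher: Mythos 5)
Your proof is correct, and the harder inclusion $\nc{H}\subseteq\bigcup\{B\mid B\rho H\}$ is argued exactly as in the paper: unwind the chain of $\psi$-witnesses supplied by the Main Result into a chain of pairwise-intersecting blocks terminating at $H$, then collapse it using transitivity; both you and the paper correctly confine the use of the hypothesis to this single step. Where you diverge is the easy inclusion. The paper also derives $B\subseteq\nc{H}$ from the Main Result: an element $h\in B\cap H$ gives $b\psi h$ for every $b\in B$, hence $b\in S_1\subseteq S=\nc{H}$. You instead observe that $aHbH\subseteq a\nc{H}b\nc{H}=ab\nc{H}$, so every block lies inside a single left coset of $\nc{H}$, and a block meeting $H$ must therefore lie inside $\nc{H}$ itself. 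Your version is slightly more self-contained, needing only normality of the closure and the disjointness of its cosets rather than the full strength of $S=\nc{H}$, and it makes explicit the structural fact that the blocks of $H$ refine the cosets of $\nc{H}$ --- a fact the paper itself invokes in passing inside the proof of the Main Result. Either route is fine; the substance of the lemma, and of both proofs, lives entirely in the reverse inclusion.
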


\begin{proof}
Assume $B\rho H$. Then given any $b\in B$ we have that $b\psi h$ for some $h\in H$ since $B\rho H$ implies that $B\cap H\neq\varnothing$ and thus $b,h\in B$. By our main result we know that $b\in\nc{H}$, so certainly $B\subset\nc{H}$ for every $B\rho H$.

Now suppose we have some block $C$ with $c\in C$ such that $c\in\nc{H}$. Then again by our main result we know that $$c\psi b_k\psi b_{k-1}\psi\dots\psi b_1\psi h$$ for some $b_i$ such that $b_i\in B_i$ for blocks $B_i$. Then by definition of $\psi$ we have that $$C\rho B_k\rho B_{k-1}\rho\cdots\rho B_1\rho H$$ so by transitivity of $\rho$ we have $C\rho H$. It follows that the blocks $B$ such that $B\rho H$ are precisely the elements of $\nc{H}$.
\end{proof}

In other words, if $\rho$ is transitive then $\nc{H}=\bigcup\{B|B\cap H\neq\varnothing\}$ for any $H\le G$.

\section{Example Computations}
The study of this generalized quotient process was heavily aided by the examination of examples drawn up by scripts using SageMath (\url{http://www.sagemath.org/}). These scripts can be found at \url{https://github.com/caten2/NonnormalQuotient}. Some sample output follows.

\begin{figure}[h]
\caption{Quotient of $S_3$ by $H=\langle(1,2)\rangle$.}
\begin{tabular}{| *{3}{r|} *{2}{c} | *{2}{c} | *{2}{c} | } \cline{4-9}
\multicolumn{3}{c|}{} & \multicolumn{6}{c|}{$\nc{H}$} \\ \cline{4-9}
\multicolumn{3}{c|}{} & \multicolumn{2}{c|}{$H$} & \multicolumn{2}{c|}{$(2,3)H$} & \multicolumn{2}{c|}{$(1,3,2)H$} \\ \cline{4-9}
\multicolumn{3}{c|}{} & $()$ & $(1,2)$ & $(2,3)$ & $(1,2,3)$ & $(1,3,2)$ & $(1,3)$ \\ \hline
\multirow{6}{*}{$\nc{H}$} & \multirow{2}{*}{$H$} & $()$ & $()$ & $(1,2)$ & $(2,3)$ & $(1,2,3)$ & $(1,3,2)$ & $(1,3)$ \\
& & $(1,2)$ & $(1,2)$ & $()$ & $(1,3,2)$ & $(1,3)$ & $(2,3)$ & $(1,2,3)$ \\ \cline{2-9}
& \multirow{2}{*}{$(2,3)H$} & $(2,3)$ & $(2,3)$ & $(1,2,3)$ & $()$ & $(1,2)$ & $(1,3)$ & $(1,3,2)$ \\
& & $(1,2,3)$ & $(1,2,3)$ & $(2,3)$ & $(1,3)$ & $(1,3,2)$ & $()$ & $(1,2)$ \\ \cline{2-9}
& \multirow{2}{*}{$(1,3,2)H$} & $(1,3,2)$ & $(1,3,2)$ & $(1,3)$ & $(1,2)$ & $()$ & $(1,2,3)$ & $(2,3)$ \\
& & $(1,3)$ & $(1,3)$ & $(1,3,2)$ & $(1,2,3)$ & $(2,3)$ & $(1,2)$ & $()$ \\ \hline
\end{tabular}
\end{figure}

\begin{figure}[h]
\caption{Quotient of $S_3$ by $H=\langle(1,3)\rangle$.}
\begin{tabular}{| *{3}{r|} *{2}{c} | *{2}{c} | *{2}{c} | } \cline{4-9}
\multicolumn{3}{c|}{} & \multicolumn{6}{c|}{$\nc{H}$} \\ \cline{4-9}
\multicolumn{3}{c|}{} & \multicolumn{2}{c|}{$H$} & \multicolumn{2}{c|}{$(2,3)H$} & \multicolumn{2}{c|}{$(1,2)H$} \\ \cline{4-9}
\multicolumn{3}{c|}{} & $()$ & $(1,3)$ & $(2,3)$ & $(1,3,2)$ & $(1,2)$ & $(1,2,3)$ \\ \hline
\multirow{6}{*}{$\nc{H}$} & \multirow{2}{*}{$H$} & $()$ & $()$ & $(1,3)$ & $(2,3)$ & $(1,3,2)$ & $(1,2)$ & $(1,2,3)$ \\
& & $(1,3)$ & $(1,3)$ & $()$ & $(1,2,3)$ & $(1,2)$ & $(1,3,2)$ & $(2,3)$ \\ \cline{2-9}
& \multirow{2}{*}{$(2,3)H$} & $(2,3)$ & $(2,3)$ & $(1,3,2)$ & $()$ & $(1,3)$ & $(1,2,3)$ & $(1,2)$ \\
& & $(1,3,2)$ & $(1,3,2)$ & $(2,3)$ & $(1,2)$ & $(1,2,3)$ & $(1,3)$ & $()$ \\ \cline{2-9}
& \multirow{2}{*}{$(1,2)H$} & $(1,2)$ & $(1,2)$ & $(1,2,3)$ & $(1,3,2)$ & $(2,3)$ & $()$ & $(1,3)$ \\
& & $(1,2,3)$ & $(1,2,3)$ & $(1,2)$ & $(1,3)$ & $()$ & $(2,3)$ & $(1,3,2)$ \\ \hline
\end{tabular}
\end{figure}

\begin{figure}[h]
\caption{Quotient of $S_3$ by $H=\langle(2,3)\rangle$.}
\begin{tabular}{| *{3}{r|} *{2}{c} | *{2}{c} | *{2}{c} | } \cline{4-9}
\multicolumn{3}{c|}{} & \multicolumn{6}{c|}{$\nc{H}$} \\ \cline{4-9}
\multicolumn{3}{c|}{} & \multicolumn{2}{c|}{$H$} & \multicolumn{2}{c|}{$(1,2)H$} & \multicolumn{2}{c|}{$(1,2,3)H$} \\ \cline{4-9}
\multicolumn{3}{c|}{} & $()$ & $(2,3)$ & $(1,2)$ & $(1,3,2)$ & $(1,2,3)$ & $(1,3)$ \\ \hline
\multirow{6}{*}{$\nc{H}$} & \multirow{2}{*}{$H$} & $()$ & $()$ & $(2,3)$ & $(1,2)$ & $(1,3,2)$ & $(1,2,3)$ & $(1,3)$ \\
& & $(2,3)$ & $(2,3)$ & $()$ & $(1,2,3)$ & $(1,3)$ & $(1,2)$ & $(1,3,2)$ \\ \cline{2-9}
& \multirow{2}{*}{$(1,2)H$} & $(1,2)$ & $(1,2)$ & $(1,3,2)$ & $()$ & $(2,3)$ & $(1,3)$ & $(1,2,3)$ \\
& & $(1,3,2)$ & $(1,3,2)$ & $(1,2)$ & $(1,3)$ & $(1,2,3)$ & $()$ & $(2,3)$ \\ \cline{2-9}
& \multirow{2}{*}{$(1,2,3)H$} & $(1,2,3)$ & $(1,2,3)$ & $(1,3)$ & $(2,3)$ & $()$ & $(1,3,2)$ & $(1,2)$ \\
& & $(1,3)$ & $(1,3)$ & $(1,2,3)$ & $(1,3,2)$ & $(1,2)$ & $(2,3)$ & $()$ \\ \hline
\end{tabular}
\end{figure}

\printbibliography

\end{document}